\title[The algebraic fifth transfer]{On behavior of the fifth algebraic
transfer}
\author{V\~{o} T\,N Qu\`ynh}
\address{Department of Mathematics\\
Vietnam National University\\\newline
334 Nguyen Trai Street\\
Hanoi\\
Vietnam}
\email{quynhvtn@vnu.edu.vn}
\urladdr{}
\def\cnewtheorem#1[#2]#3{\newtheorem{#1}{#3}[section]
\expandafter\let\csname c@#1\endcsname\c@Theorem}
\newcommand{\tr}{\mathrm{tr}}
\newcommand{\Tr}{\mathrm{Tr}}
\newcommand{\Sq}{\mathrm{Sq}}
\newcommand{\Ext}{\mathrm{Ext}}
\newtheorem{Theorem}{Theorem}[section]
\newcommand{\F}{{\mathbb F}}
\newcommand{\Fd}{\F_2}
\newcommand{\cala}{{\cal A}}
\newcommand{\calL}{{\cal L}}
\newcommand{\V}{{\mathbb V}}  
\newcommand{\glk}{GL_k}
\newcommand{\glfive}{GL_5}
\newcommand{\otiglk}{{\otimes_{\glk}}}
\newcommand{\otiglfive}{{\otimes_{\glfive}}}
\newcommand{\oticala}{{\otimes_{\cala}}}
\begin{document}

\begin{htmlabstract}
In this paper, we show that Singer's fifth transfer is not an epimorphism
in degree 11.  More precisely, it does not detect the element P(h<sub>2</sub>)
in Ext<sub>A</sub><sup>5,16</sup>(<b>F</b><sub>2</sub>,<b>F</b><sub>2</sub>).
\end{htmlabstract}

\begin{abstract} 
In this paper, we show that Singer's fifth transfer is not an epimorphism
in degree 11.  More precisely, it does not detect the element $P(h_2)
\in \smash{\mathrm{Ext}_{\mathcal{A}}^{5, 16}(\mathbb{F}_2,\mathbb{F}_2)}$.
\end{abstract}

\maketitle

\section{Introduction and statement of results}\label{sec1}

Throughout the paper, the homology is taken with coefficients in $\Fd$.
Let $\V_k$ denote a $k$--dimensional $\Fd$--vector space, and
$PH_*(B\V_k)$ the primitive subspace consisting of all elements in
$H_*(B\V_k)$, which are annihilated by every positive-degree
operation in the mod 2 Steenrod algebra, $\cala$.  The
general linear group $GL_k:=GL(\V_k)$ acts regularly on $\V_k$ and
therefore on the homology and cohomology of $B\V_k$. Since the two
actions of $\cala$ and $GL_k$ upon $H^*(B\V_k)$ commute with each
other, there are inherited actions of $GL_k$ on $\Fd\oticala
H^*(B\V_k)$ and $PH_*(B\V_k)$. In \cite{Singer}, W Singer defined
the algebraic transfer
$$
\Tr_k \co \Fd{\otiglk}PH_d(B\V_k)\to \Ext_{\cala}^{k,k+d}(\Fd,\Fd)
$$
as an algebraic version of the geometrical transfer 
$\tr_k \co \pi_*^S((B\V_k)_+) \to \pi_*^S(S^0)$ to the stable homotopy groups
of spheres.

It has been proved that $\Tr_k$ is an isomorphism for $k=1,2$ by
Singer \cite{Singer} and for $k=3$ by Boardman \cite{Boardman}.
Among other things, these data together with the fact that
$\Tr=\bigoplus_{k} \Tr_k$ is an algebra homomorphism 
\cite{Singer} show that $\Tr_k$ is highly nontrivial. 

Therefore, the algebraic transfer is expected to be a useful tool
in the study of the mysterious cohomology of the Steenrod algebra,
$\Ext^{*,*}_{\cala}(\Fd, \Fd)$. In \cite{Hung97}, H\uhorn{}ng established
an attractive relationship between the algebraic transfer, the classical
conjecture on spherical classes, and the so-called ``hit'' problem.

Further, in \cite{Singer}, Singer gave computations to show that $\Tr_4$ is 
an isomorphism  in a range of degrees and recognized that $\Tr_5$ is not an 
epimorphism in degree 9. Then, he set up the following conjecture. 

\begin{Conjecture}[Singer \cite{Singer}]
\label{Singer}
$\Tr_k$ is a monomorphism for every $k$.
\end{Conjecture}

Recently, Bruner--Ha--H\uhorn{}ng showed in \cite{BHH} that $\Tr_4$  
does not detect the family $\{g_i\,| \, i\geq 0\}$. 
Furthermore, H\uhorn{}ng proved in \cite{Hung} that for every $k\geq 5$, 
there are infinitely many degrees in which  $\Tr_k$ is not an isomorphism.
Remarkably,  it has not been known whether the algebraic transfer fails to
be a monomorphism or fails to be an epimorphism for $k>5$.
Therefore, Singer's conjecture is still open.

The aim of this paper is to investigate the behavior of  $\Tr_5$ in degree 
11. 
We prove the following theorem.

\begin{Theorem}\label{main}
The element $P(h_2) \in \Ext_{\cala}^{5, 16}(\Fd,\Fd)$ is not in the 
image of the 
algebraic transfer $\Tr_5$. 
\end{Theorem}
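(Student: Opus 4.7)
The plan is to pass to the dual of Singer's fifth transfer and show that no $GL_5$--invariant of the relevant cohit module can be sent to $P(h_2)$. Set $P_5 = H^*(B\V_5) = \Fd[x_1,\ldots,x_5]$ with $|x_i|=1$. By the standard duality between $PH_*$ and the cohit module,
$$
\Fd\otiglfive PH_{11}(B\V_5) \;\cong\; \bigl((\Fd\oticala P_5)_{11}\bigr)^{\glfive},
$$
so it suffices to exhibit a basis for the $\glfive$--invariants on the right and to verify that none of them, after applying $\Tr_5$, equals $P(h_2)$ in $\Ext_{\cala}^{5,16}(\Fd,\Fd)$.

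Step 1 is to solve the hit problem for $P_5$ in degree 11, following the Kameko--Peterson--Sum framework. I would list the admissible monomials of degree 11 in five variables: the spikes --- the permutations of $(7,3,1,0,0)$, $(7,1,1,1,1)$ and $(3,3,3,1,1)$ --- are never hit, and together with a finite collection of non-spike admissibles they span $(\Fd\oticala P_5)_{11}$. Step 2 uses Kameko's squaring $\ov{\Sq}^0_* \co (\Fd\oticala P_5)_{11} \to (\Fd\oticala P_5)_3$, legitimate since $11 = 2\cdot 3 + 5$. This is a morphism of $\glfive$--modules and its target is small enough to read off by hand; thus the $\glfive$--invariants split into an ``old part'' coming from Kameko's image and a ``new part'' lying in $\ker(\ov{\Sq}^0_*)$. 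Step 3 computes the $\Sigma_5$--invariants in each part by symmetrization and then cuts them down to $\glfive$--invariants by imposing the relations coming from the remaining unimodular transvections.

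Finally, Step 4 feeds each basis invariant $[f]$ produced in Step 3 through Singer's chain--level transfer formula in the cobar complex of $\cala$, identifies $\Tr_5([f])$ as a specific class in $\Ext_{\cala}^{5,16}(\Fd,\Fd)$, and compares it with $P(h_2)$ via the known description of this bidegree. The main obstacle is Step 3: the modular representation theory of $GL_5(\Fd)$ on $P_5$ in internal degree $11$ is significantly more delicate than the cases treated by Singer and by Bruner--Ha--H\uhorn{}ng for $\Tr_4$, because there are many hit relations in this bidegree and the $\glfive$--orbit sums interact nontrivially with them. Once a small, explicit set of invariants is in hand, the transfer evaluation in Step 4 should be mechanical and the disagreement with $P(h_2)$ should be immediate.
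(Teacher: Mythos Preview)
Your outline is a viable strategy, but it misses the decisive simplification that makes the paper's proof short: in fact $(\Fd\oticala P_5)_{11}^{\glfive}=0$ (this is Proposition~1.3, proved in Section~3). Once that vanishing is established, the domain of $\Tr_5$ in this bidegree is zero by duality, and since $P(h_2)\neq 0$ in $\Ext_{\cala}^{5,16}(\Fd,\Fd)$ (Tangora, Bruner), the theorem follows at once. Your Step~4 is therefore vacuous: there is no basis of invariants to push through the cobar complex, and no comparison with $P(h_2)$ beyond the single observation that it is nonzero is needed.

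The paper's route to the vanishing is also different from your proposed Kameko splitting. Rather than invoking $\ov{\Sq}^0_*$, the paper writes down an explicit (non-minimal) spanning set for $(\Fd\oticala P_5)_{11}$ consisting of eight monomials and their $\Sigma_5$--orbits (Lemma~2.1), proves a direct-sum decomposition of $(\Fd\oticala P_5)_{11}$ into seven $\Sigma_5$--submodules $\calL(A),\ldots,\calL(F),\calL(G,H)$ (Lemmas~2.2--2.3), and then shows piece by piece, using explicit transvections $\omega_{xy}\colon x\mapsto x+y$ and projections $\pi_u\colon P_5\to P_5/(u)$, that any $\glfive$--invariant must have zero component in each summand (Lemmas~3.2--3.6). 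Spike monomials serve as witnesses throughout: whenever a putative invariant $p$ is acted on by a transvection, one locates a spike in $\omega(p)+p$ and reads off a vanishing coefficient. Your Kameko approach would, if carried out, also uncover this vanishing, but the paper trades the bookkeeping of $\ker(\ov{\Sq}^0_*)$ for a longer, entirely elementary case analysis; either method buys the same conclusion, namely that Step~4 is unnecessary.

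One small logical correction to your Step~4 as written: even if the invariant space were nonzero, checking that no single basis element maps to $P(h_2)$ would not suffice; you would need that $P(h_2)$ lies outside the \emph{span} of their images under $\Tr_5$.
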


Let $P_k:=H^*(B\V_k)$ be the polynomial algebra of $k$ variables, each of 
degree $1$. 
Then, the domain of $\Tr_k$, $\Fd\otiglk PH_*(B\V_k)$, is dual to 
$(\Fd\oticala P_k)^{GL_k}$.
In order to prove \fullref{main}, it suffices to show the following.

\begin{Proposition}\label{invariant-intro}
$(\Fd\oticala P_5)^{GL_5}_{11}=0$.
\end{Proposition}

Although our result does not give an answer to Singer's conjecture, 
it gives one more degree where the fifth algebraic  transfer fails to be 
an epimorphism. 

It should be noted that, R Bruner generously informed us that by using 
computer,  
he showed that $(\Fd\oticala P_5)_{11}$ is a 315--dimensional $\Fd$--vector 
space, 
and that its $GL_5$--invariant is zero. In this paper, we prove the 
proposition by using 
some convenient generators for $(\Fd\oticala P_5)_{11}$, which do not form 
a basis of the vector space. 

The paper is divided into four sections. \fullref{sec2} 
deals with the computation of minimal 
$\cala$--generators for the polynomial algebra $P_5$ in degree 11. Then, 
we prove \fullref{invariant-intro} and \fullref{main} in 
\fullref{sec3} and \fullref{sec4} respectively. 

\subsection*{Acknowledgments} 
I would like to express my warmest thanks to my supervisor, Professor 
Nguyen H\,V H\uhorn{}ng, 
for his inspiring guidance and  suggestions. 
I  am indebted to Professor R Bruner for kindly letting me use his 
computer's calculation on $(\Fd\oticala P_5)^{GL_5}_{11}$.
I am grateful to Dr Tran Ngoc Nam for many helpful discussions.
The work was supported in part by the National Research 
Program, Grant number 140 804.

\section[Computation of the indecomposables of P_5 in degree 11]
{Computation of the indecomposables of $P_5$ in degree 11.}
\label{sec2}

From now on, let us write $x=x_1$, $y=x_2$, $z=x_3$, $t=x_4$, $u=x_5$ and
denote the monomial $x^ay^bz^ct^du^e$ by $(a, b, c, d, e)$ for 
abbreviation.

\begin{Lemma}\label{generator}
The $\Fd$--vector space $(\Fd \oticala P_5)_{11}$ is generated by  
(the classes represented by) 
the following monomials and their permutations:
$$ 
\begin{matrix}    
(7, 3, 1, 0, 0), &  (7, 2, 1, 1, 0), &  (7, 1, 1, 1, 1), &  (5,3, 1, 1, 
1) \\
(5, 3, 3, 0, 0), &  (5, 3, 2, 1, 0), &  (3, 3, 3, 1, 1), &  (4, 3, 2, 1, 
1).                                                                    
\end{matrix}  
$$
\end{Lemma}

\begin{proof}
The monomials in the third column  are {\it spikes} in the meaning of 
W\,M Singer
\cite{Singer91} that  their exponents are all of the form $2^n-1$  for 
some $n$. 
It is well known that spikes do not appear in the expression of $\Sq^i Y$
for any $i$ positive and any monomial $Y$, since the powers
$x^{2^{n}-1}$ are not hit in the one variable case. 
Note that the elements in the first and second columns are respectively 
monomials which  depend only on three and four variables. The last two 
column's monomials depend on exactly five variables. 

Consider the projection $P_5\rightarrow \Fd\oticala P_5$. We show that under 
this projection, all monomials in degree 11 not listed in the Lemma go to 
zero except for the following six and permutations
\begin{align*}
(6,3,1,1) & \mapsto (5,3,2,1) + (5,3,1,2)\\
(4,3,3,1) & \mapsto (2,3,5,1) +(2,5,3,1)\\
(3,3,3,2) & \mapsto (2,3,5,1) +(2,5,3,1)+(3,2,5,1)+(5,2,3,1)+(3,5,2,1)\\
& \quad +  (5,3,2,1)\\
(5,2,2,1,1) & \mapsto (3,4,2,1,1)+(3,2,4,1,1)\\
(6,2,1,1,1) & \mapsto (3,4,2,1,1)+(3,2,4,1,1)+(3,4,1,2,1)+(3,2,1,4,1) \\
&  \quad +   (3,4,1,1,2)+(3,2,1,1,4)\\
(3,3,2,2,1) & \mapsto (5,3,1,1,1)+(3,5,1,1,1)+(4,3,1,1,2)+(3,4,1,1,2).
\end{align*} 
As the action of the Steenrod algebra on $P_5$ commutes with that of the 
general linear group $GL_5$, without loss of generality, we need only to 
consider  monomials $(a, b, c, d, e)$ in degree 11 of $P_5$ with $a \geq b 
\geq c\geq d\geq e$. We have  the  following five cases. 

\textbf{Case 1}\qua The monomial $(a, b, c, d, e)$ depends only on
one variable, $(a, b, c, d, e)=(a, 0, 0, 0, 0)$ with $a\neq 0$.  There is
only one such a monomial in degree 11  of $P_5$, namely $(11,0,0,0,0)$ .
It is hit because 
$$
(11,0,0,0,0) = \Sq^4 (7,0,0,0,0). 
$$
\textbf{Case 2}\qua The monomial $(a, b, c, d, e)$ depends on exactly two 
variables, 
$(a, b, c, d, e)=(a, b, 0, 0, 0)$, where $a$ and $b$ are nonzero. It is 
also hit, as we have
\begin{eqnarray*}
(10, 1, 0,0,0) & = &\Sq^4 (6,1,0,0,0)\\
(9, 2, 0,0,0) & = &\Sq^4 (5,2,0,0,0)\\
(8, 3, 0,0,0) & = &\Sq^4 (4,3,0,0,0)\\
(7, 4, 0,0,0) & = &\Sq^4 (5,2,0,0,0)+ \Sq^2(7,2,0,0,0)\\
(6, 5, 0,0,0) & = &\Sq^4 (4,3,0,0,0) + \Sq^2(6,3,0,0,0).
\end{eqnarray*}
\textbf{Case 3}\qua The monomial $(a, b, c, d, e)$ depends exactly on three 
variables
$(a, b, c, d, e)=(a, b, c, 0, 0)$, where $a$, $b$ and $c$ are nonzero.
This should be one of the following monomials: 
\begin{eqnarray*}
&& (7,3,1,0,0), (5,3,3,0,0) \\
&& (9,1,1,0,0), (8,2,1,0,0), (7,2,2,0,0), (6,4,1,0,0),(6,3,2,0,0), 
(5,4,2,0,0) \\
&& (5,5,1,0,0), (4,4,3,0,0). 
\end{eqnarray*} 
The first two monomials are listed in the lemma. 
The last eight monomials  are killed by the Steenrod algebra, since we have
\begin{eqnarray*}
(9,1,1,0,0)&=& \Sq^4(5,1,1,0,0)\\
(8,2,1,0,0)&=& \Sq^4(4,2,1,0,0)\\
(7,2,2,0,0)&=& \Sq^1(7,2,1,0,0)+ \Sq^4(4,2,1,0,0)\\
(6,4,1,0,0)&=& \Sq^2(6,2,1,0,0)+ \Sq^4(4,2,1,0,0)\\
(6,3,2,0,0)&=& \Sq^1(6,3,1,0,0)+ \Sq^2(6,2,1,0,0)+ \Sq^4(4,2,1,0,0)\\
(5,4,2,0,0)&=& \Sq^1(5,4,1,0,0)+ \Sq^2(6,2,1,0,0)+ \Sq^4(4,2,1,0,0), 
\end{eqnarray*}
and 
\begin{eqnarray*}
(5,5,1,0,0)& =& (6,4,1,0,0)+(6,3,2,0,0)+(5,4,2,0,0)+\Sq^2(5,3,1,0,0)\\
(4,4,3,0,0)&=&(4,2,5,0,0) +\Sq^2(4,2,3,0,0).
\end{eqnarray*}
\textbf{Case 4}\qua  The monomial $(a, b, c, d, e)$ depends exactly on four 
variables, 
$(a, b, c, d, e)=(a, b, c, d, 0)$, where $a$, $b$, $c$  and $d$ are non 
zero.
This should be one of the following monomials: 
\begin{align*}
&(7, 2, 1, 1, 0),  (5, 3, 2, 1, 0)\\
&(8,1,1,1,0) , (6,2,2,1,0), (5,2,2,2,0), (4,4,2,1,0), (4,3,2,2,0), 
(5,4,1,1,0)  \\
&(6,3,1,1,0), (4,3,3,1,0), (3,3,3,2,0).  
\end{align*} 
The first two monomials are listed in the lemma. 
The next six  monomials  are killed by the Steenrod algebra, since we have
\begin{eqnarray*}
(8,1,1,1,0) & = & \Sq^4(4,1,1,1,0)\\
(6,2,2,1,0)&=&\Sq^5(3,1,1,1,0) + \Sq^4(4,1,1,1,0)\\
(5,2,2,2,0)&= & (6,2,2,1,0) +  \Sq^1(5,2,2,1,0)\\
(4,4,2,1,0)&=&\Sq^4(2,2,2,1,0)+\Sq^2(2,2,4,1,0)\\
(4,3,2,2,0)&=& (4,4,2,1,0) +\Sq^1(4,3,2,1,0)\\
(5,4,1,1,0)&=&(4,4,2,1,0)+(4,4,1,2,0)+(3,4,2,2,0)+ \Sq^2(3,4,1,1,0).
\end{eqnarray*} 
The last three monomials $(6,3,1,1,0), (4,3,3,1,0), (3,3,3,2,0)$ can be  
expressed in terms of the monomials $(7, 2, 1,1 ,0), (5, 3, 2, 1, 0)$  and 
their permutations. Indeed, we get the following equalities
\begin{eqnarray*}
(6,3,1,1,0) & = & (5,3,2,1,0)+(5,3,1,2,0)+(5,4,1,1,0)+\Sq^1(5,3,1,1,0)\\
(4,3,3,1,0) &= & (2,3,5,1,0) +(2,5,3,1,0) +(2,4,4,1,0) +(2,3,4,2,0) \\
&   & +(2,4,3,2,0) + \Sq^2(2,3,3,1,0) \\           
(3,3,3,2,0) &= & (4,3,3,1,0) +(3,4,3,1,0)+(3,3,4,1,0) + \Sq^1(3,3,3,1,0).
\end{eqnarray*}
\textbf{Case 5}\qua The monomial $(a, b, c, d, e)$ depends exactly on five 
variables, 
$(a, b, c, d, e)=(a, b, c, d, e)$, where $a$, $b$, $c$, $d$  and $e$ are 
nonzero.
This should be one of the following monomials: 
\begin{eqnarray*}
&&  (7, 1, 1, 1, 1), (5, 3, 1, 1, 1), (3, 3, 3, 1, 1),  (4, 3, 2, 1, 1)  
\\
&&  (4, 4, 1, 1, 1), (4, 2, 2, 2, 1), (3, 2, 2, 2, 2) \\
&&  (5, 2, 2, 1, 1), (6, 2, 1, 1, 1), (3, 3, 2, 2, 1).  
\end{eqnarray*}
The first four monomials are listed in the lemma. 
The next three  monomials  are hit by the Steenrod algebra, since we have
\begin{eqnarray*}
(4,4,1,1,1)&=&\Sq^2(4,2,1,1,1)+\Sq^2(2,4,1,1,1)+\Sq^4(2,2,1,1,1)\\
(4,2,2,2,1)&=&\Sq^4(2,2,1,1,1)+ \Sq^2(2,4,1,1,1)+\Sq^1(4,2,1,1,2)\\
(3,2,2,2,2) &= &(4,2,2,2,1) + \Sq^1(3,2,2,2,1).
\end{eqnarray*}
The last three monomials $(5,2,2,1,1), (6,2,1,1,1), (3,3,2,2,1) $ are 
expressed in terms 
of the monomials $(5,3,1,1,1), (4,3,2,1,1)$ and their permutations. Indeed
\begin{eqnarray*}
(5,2,2,1,1) &=& (3,4,2,1,1)+(3,2,4,1,1)+(4,2,2,2,1) +(4,2,2,1,2) \\
&  &+(3, 2, 2, 2, 2) + \Sq^2(3,2,2,1,1) \\                   
(6,2,1,1,1) &= & (5,2,2,1,1)+(5,2,1,2,1)+(5,2,1,1,2) +\Sq^1(5, 2, 1,1,1)\\
(3,3,2,2,1) &=& (5,3,1,1,1)+(3,5,1,1,1)+(4,4,1,1,1)+(4,3,1,1,2) \\
& & +(3,4,1,1,2) +\Sq^2(3,3,1,1,1) \\     
& &+ \Sq^1(3,3,1,1,2)+\Sq^1(4,3,1,1,1)+\Sq^1(3,4,1,1,1).
\end{eqnarray*}
The lemma is proved.
\end{proof} 

We denote by $A, B, C, D, E, F, G, H$   the families of all permutations of 
the following monomials respectively 
\begin{eqnarray*}
&& (7,3,1,0,0), (5,3,3,0,0), (7,2,1,1,0), (5,3,2,1,0) \\
&& (7,1,1,1,1), (3,3,3,1,1), (5,3,1,1,1), (4,3,2,1,1).
\end{eqnarray*}
For $X$ one of the families $A,  B, C, D, E, F, G, H$, let $\calL (X)$ be 
the vector subspace of $(\Fd \oticala P_5)_{11}$ spanned by all the 
elements of the family $X$. Further, set 
$\calL (G,H)= \calL (G) + \calL (H)$.

\begin{Lemma}\label{linearrelation}
Every  $p \in (\Fd\oticala P_5)_{11}$ can be expressed uniquely as a sum
$$ p = p_{A}+ p_{B}+p_C+p_D+p_E+p_F+p_{(G,H)},$$
where $p_X  \in  \calL(X)$ for $X \in  \{A, B, C, D, E, F \}$  and 
$p_{(G,H)}\in \calL (G,H).$ 
\end{Lemma}

\begin{proof}
By \fullref{generator}, 
if  $p \in (\Fd\oticala P_5)_{11}$  then $p$ can be 
expressed  as a sum of elements in $\calL (A), \calL (B), \calL (C), \calL 
(D), \calL (E), \calL (F)$ and  in $\calL (G, H)$. 
In order to prove the uniqueness of the expression we now 
suppose that there is a linear relation 
$$ p_{A}+ p_{B}+p_C+p_D+p_E+p_F+p_{(G,H)}= 0 $$
in $(\Fd\oticala P_5)_{11}$, where $p_X  \in  \calL(X) $ for 
$X \in  \{A, B, C, D, E, F \}$  and $X= (G,H).$ 
We need to show 
$p_{A}= p_{B}=p_C=p_D=p_E=p_F=p_{(G,H)}= 0$ in $(\Fd\oticala P_5)_{11}.$
First, we note that $p_A = p_E = p_F = 0$, 
as  $p_A, p_E, p_F$  are expressed in terms of the spikes,  which do not 
appear in the expression of $\Sq^i(Y)$ for any $i$ positive and any 
monomial $Y$.
Hence
$$  p_{B}+p_C+p_D+p_{(G,H)}= 0. $$
Consider the homomorphism $\pi_{tu} \co \Fd\oticala P_5\rightarrow 
\Fd\oticala P_3$ induced by the projection $P_5\rightarrow P_5/(t, u)\cong 
P_3.$ Under this homomorphism, the image of the above linear relation is 
$\pi_{tu} (p_B)= 0.$ 
Using all the projections from $P_5$ to its quotients by the ideals 
generated 
by any pairs of the five variables $x, y, z, t, u$, we get $p_B = 0.$
Hence
$$  p_C+p_D+p_{(G,H)}= 0. $$
Next, we
consider the homomorphism 
$\pi_{u} \co \Fd\oticala P_5\rightarrow \Fd\oticala P_4$ 
induced by the projection $P_5\rightarrow P_5/( u)\cong P_4.$ 
Let $\pi_{u}$ act on  both sides of the above equality, we get 
$$\pi_{u}(p_C) + \pi_{u}(p_D) = 0,$$
where $\pi_{u}(p_C)$  is a linear combination of permutations of element 
$(7,2,1,1).$
As $7 $ and $1$ are  of the form $2^n-1$, the monomial $(7,2,1,1)$  
appears  only 
as  a term  in $\Sq^i(a, b, c, d)$ for $i=1$ and $(a, b, c, d)= (7,1,1,1)$ 
as follows
$$\Sq^1(7,1,1,1)= (8,1,1,1)+ (7,2,1,1)+ (7,1,2,1)+ (7,1,1,2) .$$
So, $\pi_u(p_C)$ contains $(7,2,1,1)$ as a term if and only if it also 
contains $(7,1,2,1)+ (7,1,1,2) $. 
A consequence of the above expression of $\Sq^1(7,1,1,1)$ is
$$(7,2,1,1)+ (7,1,2,1)+ (7,1,1,2)= 0, $$
since  $(8,1,1,1)= \Sq^4(4,1,1,1)$. 
Thus, $\pi_u(p_C) =0$, and therefore  $\pi_u(p_D) = 0.$ 

In the above argument, replacing the homomorphism $\pi_{u}$ by any of $\pi_{x},  \pi_{y}, \pi_{z},  \pi_{t}$, and we get
$$p_C = p_D = p_{(G, H)}= 0.
\proved
$$
\end{proof}

The following Lemma is a consequence of  \fullref{generator}  and 
\fullref{linearrelation}.  
\begin{Lemma}\label{decomposition}
There is  a decomposition 
of $F_2$--vector spaces
$$ (\Fd \oticala P_5)_{11}= \calL(A) \oplus \calL(B)\oplus \calL(C)\oplus 
\calL(D)\oplus \calL(E)\oplus \calL(F) \oplus \calL(G, H).$$
\end{Lemma}

\section[GL_5 invariants of the indecomposables of P_5]
{$GL_5$--invariants of the indecomposables of $P_5$ in degree $11$}
\label{sec3}

The goal of this section is to prove the following proposition, which is 
also numbered 
as \fullref{invariant-intro} in the introduction.

\begin{Proposition}\label{invariant}
$(\Fd \oticala P_5)_{11}^{GL_5}= 0.$
\end{Proposition}

Let $S_5$ be the symmetric group  on $5$ letters $x, y, z, t, u$. 
It is easy to see that $\calL(A)$, $\calL(B)$, $\calL(C)$, $\calL(D)$,
$\calL(E)$, 
$\calL(F)$ and $\calL(G, H)$ are all $S_5$--submodules.  So the equality in 
\fullref{decomposition}
$$ 
(\Fd \oticala P_5)_{11} = \calL(A) \oplus \calL(B)\oplus \calL(C)\oplus 
\calL(D)\oplus \calL(E)\oplus \calL(F) \oplus \calL(G, H)
$$
is  a decomposition of $S_5$--modules. 

By \fullref{linearrelation},  
every  $p \in (\Fd\oticala P_5)_{11}$ can be expressed uniquely as a sum
$$ p = p_{A}+ p_{B}+p_C+p_D+p_E+p_F+p_{(G,H)},$$
where $p_X  \in  \calL(X)$ for $X \in  \{A, B, C, D, E, F \}$  and 
$p_{(G,H)}\in \calL (G,H).$  So, each term of the sum  is an 
$S_5$--invariant.

For $X$ one of the letters $A, B, C, D, E, F$, let $x_i$ be the 
coefficient in the above expression of $p$ of the $i$th monomial in the 
family $X$ ordered lexicographically. 
Note that all monomials of families $A, E$ and $F$ are spikes. 
It is well known that spikes do not appear in the expression of $\Sq^i Y$
for any $i$ positive and any monomial $Y$. Hence, the coefficient of any 
spike
is zero in every linear relation in $\Fd\oticala P_5$.
It implies that, in the expression of $p$, the coefficients of monomials 
in each 
of the families $A, E, F$ are equal to each other. 

\fullref{invariant} is proved by combining the following five lemmas.

\begin{Lemma}\label{AB}
If  $p = p_{A}+ p_{B}+p_C+p_D+p_E+p_F+p_{(G,H)}$ 
is the decomposition of $p \in  \smash{(\Fd \oticala P_5)_{11}^{GL_5}}$
as in  \fullref{linearrelation}, then $p_A = p_B = 0.$
\end{Lemma}

\begin{proof}
With $\pi_{tu}$ defined as in the proof of \fullref{linearrelation}, we 
have
$$ 
\pi_{tu}(p) = \pi_{tu}(p_A)+ \pi_{tu}(p_B).
$$
We have
$$\pi_{tu}(p_B) = b_1(5,3,3)+ b_2(3,5,3)+b_3(3,3,5). $$

According to the argument given above,  the coefficients $a_i$ are equal 
each other.   
Set $a= a_i$ and we have
$$
\pi_{tu}(p_A) = a[(7,3,1)+ (7,1,3)+(3,7,1)+ (3,1,7)+ (1,7,3)+ (1,3,7)].
$$
We will show that $b_1= b_2 =b_3$ and $a=0.$
Associated to the two variables $x$ and $y$,  let $\sigma_{xy}$ be the 
transposition of $x$ and $y$ 
that keeps the other variables fixed.

As $p$ is a $GL_5$--invariant in  $\Fd \oticala P_5$, we have
$$\pi_{tu}(\sigma_{xy}(p)+p) = \pi_{tu} (0) = 0  
\text{ in }  \Fd \oticala P_3,$$
equivalently
$$\sigma_{xy}(\pi_{tu}(p) ) + \pi_{tu}(p )= 0
\text{ in }  \Fd \oticala P_3. $$

Combining $\pi_{tu}(p_A)=a[(7,3,1)+\mbox{\rm symmetrized}]$ with the fact 
that the monomial $(7,3,1)$ is  spike, we have 
$$\sigma_{xy}(\pi_{tu}(p_A) ) + \pi_{tu}(p_A )= 0.$$
From this, it follows that 
$$\sigma_{xy}(\pi_{tu}(p_B))  + \pi_{tu}(p_B )= 0,$$
or equivalently
$$(b_1 + b_2)((5,3,3)+ (3,5,3)) = 0.$$
However,  
\begin{eqnarray*}
(5,3,3)+(3,5,3)+(3,3,5)  &= & \Sq^2(3,3,3)+ \Sq^1(4,3,3)+\Sq^4(4,2,1)\\ 
&  &+ \Sq^2(6,2,1)+\Sq^1(5,4,1)+ \Sq^2(3,4,2).
\end{eqnarray*}
So, we get 
$$ (b_1+b_2)(3,3,5)=0.$$

On the other hand, the linear transformation $x\mapsto x+z, y\mapsto y, 
z\mapsto z $ 
sends $(3,3,5)$ to $(3,3,5)+ (2,3,6) +(1,3,7) +(0,3,8)\sim (3,3,5)+ 
(1,3,7).$ 
As the action of the Steenrod algebra commutes with linear maps, 
if $(3,3,5)$ is hit then so is $(1,3,7).$ This is impossible, because 
$(1,3,7)$ is a spike. 
Thus, $(3,3,5) \neq 0  $ in $ \Fd \oticala P_5 $ and therefore $b_1 + 
b_2 = 0,$ or  
$b_1 = b_2.$
By similarity, using all transpositions of any pairs of the three 
variables $x, y, z,$ 
we get $b_1= b_2= b_3.$
Hence
$$ \pi_{tu}(p_B )= b_1[(5,3,3)+(3,5,3)+(3,3,5)] = b_1. 0 = 0.$$

By the symmetry of the variables, we also obtain $\pi_{ij}(p_B) = 0$,
where $(i,j)$ is any pair of the five variables $x, y, z, t , u.$
Thus $p_B  = 0.$  

In order to prove $a=0,$ we consider  the linear transformation, 
$\omega_{xy }$,   
that sends $x$ to $x+y$ and keeps the other  variables fixed.
As $p_B = 0,$ we have
$
\pi_{tu}(p)= \pi_{tu}(p_A).
$
From  $\omega_{xy}(p)+p =0$, it follows that
$$
\omega_{xy}(\pi_{tu}(p_A)) + \pi_{tu}(p_A) = 0, 
$$
or equivalently
$$
a[(5,3,3)+ (3,5,3)+(1,7,3)+(3,7,1)+(1,3,7)] = 0.
$$
Combining this with the fact that $(1,7,3)$ is a spike, we get $a= 0.$
\end{proof}

\begin{Lemma}\label{CD}
If  $p = p_{A}+ p_{B}+p_C+p_D+p_E+p_F+p_{(G,H)}$ 
is the decomposition of $p \in  \smash{(\Fd \oticala P_5)_{11}^{GL_5}}$
as in  \fullref{linearrelation},  then $p_C = p_D = 0.$
\end{Lemma}

\begin{proof}
By \fullref{AB}, $p_A = p_B = 0.$  As a consequence,  $p= p_C +p_D+ p_E+ 
p_F + p_{(G, H)}.$
Let $\pi_u \co \Fd \oticala P_5 \to \Fd \oticala P_4$ be the homomorphism 
induced by the projection $P_5 \to P_5/( u) \cong P_4$ as in the proof of 
\fullref{linearrelation}. 
We have
$$ \pi_u (p) = \pi_u(p_C) + \pi_u(p_D),$$
where  $\pi_{(u)}(p_C)$ and   $\pi_{(u)}(p_D)$ are respectively certain 
linear combinations of permutations of the elements $(7,2,1,1)$ and 
$(5,3,2,1)$.

In the families $\pi_u(C), \pi_u(D)$, there are exactly three monomials 
$(x,y,z,t)$ with $t=7$, namely
$$ (2,1,1,7), (1,2,1,7), (1,1,2,7).$$
We have $\Sq^1(1,1,1,7) =  (2,1,1,7) +  (1,2,1,7)+ (1,1,2,7),$  and hence
$$
(2,1,1,7) =  (1,2,1,7)+ (1,1,2,7)  \text{ in } \Fd \oticala P_4.
$$
So we get
$$
\pi_u( p) = c_1(1,2,1,7)+c_2(1,1,2,7)+ \text{ terms of the form }
(x,y,z,t) \text{ with } t\neq 7. 
$$
Let $\omega_{xy}$ be the transposition of $x$ and $y$ as defined in the 
proof of \fullref{AB}. 
It is easily seen that 
\begin{align*}
\omega_{xy}(c_1(1,2,1,7)+c_2(1,1,2,7))& = c_1(1,2,1,7)+c_2(1,1,2,7)+ 
c_1(0,3,1,7) \\
& \qua + c_2(0,2,2,7).
\end{align*}
Combining this with the fact that $\omega_{xy}(\pi_u(p)) + \pi_u(p) =0$, 
we obtain
$c_1 = 0$, as $(0,3,1,7)$ is a spike.  

By a similar argument using $\omega_{xz}$, we get $c_2= 0.$  Hence 
$\pi_u(p_C)= 0.$ 

By the symmetry of the variables, we have  
$$
\pi_x(p_C) = \pi_y(p_C)= \pi_z(p_C)=\pi_t(p_C)= \pi_u(p_C) = 0.
$$ 
As a consequence, we get  $p_C = 0.$

Similarly,  in order to prove  $p_D = 0 $ we need only to show that 
$\pi_u(p_D) = 0.$
The family $\pi_u(D)$, which consists of  all the permutations of the 
monomials $(5,3,2,1),$ 
has twenty-four  elements.
A direct calculation shows  the following table.

{\small
\begin{center}
\begin{tabular}{|l|l|l|l|} \hline
monomial & $\omega_{xy}(\text{monomial}){+}\text{monomial}$ &
monomial & $\omega_{xy}(\text{monomial}){+}\text{monomial}$ \\ \hline
(5,3,2,1)    & (1,7,2,1) &
(5,3,1,2)    & (1,7,1,2) \\
(5,2,3,1)    & (4,3,3,1)+(1,6,3,1)+(0,7,3,1) &
(5,2,1,3)    & (4,3,1,3)+(1,6,1,3)+(0,7,1,3) \\
(5,1,3,2)    & (1,5,3,2) &
(5,1,2,3)    & (1,5,2,3) \\
(3,5,2,1)    & (1,7,2,1) &
(3,5,1,2)    & (1,7,1,2) \\
(3,2,5,1)    & (2,3,5,1) &
(3,2,1,5)    & (2,3,1,5) \\
(3,1,5,2)    & (1,3,5,2) &
(3,1,2,5)    & (1,3,2,5) \\
(2,5,3,1)    & (0,7,3,1) &
(2,5,1,3)    & (0,7,1,3) \\
(2,3,5,1)    & 0 &
(2,3,1,5)    & 0 \\
(2,1,5,3)    & (0,3,5,3) &
(2,1,3,5)    & (0,3,3,5) \\
(1,5,3,2)    & 0 &
(1,5,2,3)    & 0 \\
(1,3,5,2)    & 0 &
(1,3,2,5)    & 0 \\
(1,2,5,3)    & (0,3,5,3) &
(1,2,3,5)    & (0,3,3,5). \\ \hline
\end{tabular}
\end{center}}

Let $d_{(a,b,c,d)}$ be the coefficient of the monomial $(a,b,c,d)$ in the 
expression of $\pi_u(p_D)$.  
Since $\pi_u(p_D)$ is a $GL_4$--invariant, we have
$\omega_{xy}(\pi_u(p_D)) + \pi_u(p_D) = 0  
\text{ in } \Fd \oticala P_4. $
Combining this and the above table we obtain
\begin{align*}
[d_{(5,3,2,1)}+ d_{(3,5,2,1)}](1,7,2,1) 
+[d_{(5,3,1,2)}+d_{(3,5,1,2)}](1,7,1,2) & = 0 \\
[d_{(5,2,3,1)}+ d_{(2,5,3,1)}](0,7,3,1) 
+[d_{(5,2,1,3)}+d_{(2,5,1,3)}](0,7,1,3) & = 0 \\
[d_{(2,1,5,3)}+ d_{(1,2,5,3)}](0,3,5,3) 
+[d_{(2,1,3,5)}+d_{(1,2,3,5)}](0,3,3,5) & = 0.  
\end{align*}
As $(0,7,3,1)$ and $(0,7,1,3)$ are spikes, we get
$$d_{(5,2,3,1)}= d_{(2,5,3,1)}
  \quad\text{and}\quad
  d_{(5,2,1,3)}= d_{(2,5,1,3)}.$$
Let $\omega_{xz}$ be the linear transformation which sends $x$ to $x+z$ 
and keeps the other variables fixed. 
Applying $\omega_{xz}$  to the above first  equality, we get
$$[d_{(5,3,2,1)}+ d_{(3,5,2,1)}](0,7,3,1) 
+[d_{(5,3,1,2)}+d_{(3,5,1,2)}](0,7,2,2) = 0.  $$
It implies
$d_{(5,3,2,1)} = d_{(3,5,2,1)}$ and similarly 
$d_{(5,3,1,2)} = d_{(3,5,1,2)}$.

Similarly, it follows from the third equality that                         
$$d_{(2,1,5,3)} = d_{(1,2,5,3)}
  \quad\text{and}\quad
  d_{(2,1,3,5)} = d_{(1,2,3,5)}.$$
It is easy to see that the symmetric group on the four 
letters $\{5,3,2,1 \}$ is generated by
the transpositions  $(5,3), (5,2), (2,1).$
Combining this with the above equalities, it implies that all coefficients 
$d_{(a,b,c,d)}$ are the same. 
Let us denote this common coefficient by $d$.  We have
\begin{eqnarray*}
\omega_{xy}(\pi_u(p_D)) +\pi_u(p_D)& = 
&d[(4,3,3,1)+(1,6,3,1)+(4,3,1,3)+(1,6,1,3)\\
&  & +(1,5,3,2) +(1,5,2,3)+(2,3,5,1)+(2,3,1,5) \\
&  & +(1,3,5,2)+(1,3,2,5)].
\end{eqnarray*}
As shown in the proof of \fullref{generator}, we get
\begin{align*}
(4,3,3,1)& = (2,3,5,1)+(2,5,3,1) \\
(1,6,3,1)& = (2,5,3,1)+(1,5,3,2) \\
(4,3,1,3)& = (2,3,1,5)+(2,5,1,3) \\
(1,6,1,3)& = (2,5,1,3)+(1,5,2,3).
\end{align*}
Hence, the above equality is reduced to
$$ d[(1,3,5,2)+(1,3,2,5)] = 0.$$
Applying $\omega_{xt}$ to this relation, we get
$d[(0,3,5,3)]= 0.$
It implies $d= 0$, since we have shown  that $(0,3,5,3)$ is nonzero.

So $\pi_u(p_D) = 0$ and therefore $p_D =0$.
\end{proof}

\begin{Lemma}\label{E}
If  $p = p_{A}+ p_{B}+p_C+p_D+p_E+p_F+p_{(G,H)}$ 
is the decomposition of $p \in  \smash{(\Fd \oticala P_5)_{11}^{GL_5}}$
as in  \fullref{linearrelation},  then $p_E = 0.$
\end{Lemma}

\begin{proof}
According to the above two lemmas, $p = p_ E +p_F+ p_{(G, H)}.$

As  $(7,1,1,1,1)$  is a spike, the coefficients of its all permutations in 
the expression of 
$p \in  \smash{(\Fd \oticala P_5)_{11}^{GL_5}}$ are equal to each other. 
We denote this common coefficient by $e$.

So, $p_E$ can be written in the form
$$p_E = e [(7,1,1,1,1)+(1,7,1,1,1)+(1,1,7,1,1)+(1,1,1,7,1)+(1,1,1,1,7)],$$
where $e \in \Fd.$ 

In the families $E$, $F$, $G$, $H$ there is exactly one monomial with
$u{=}7$, namely\break $(1,1,1,1,7)$.
Let $\sigma$ be the linear transformation that sends $x$ to $x+z$, $y $ to 
$y+z$ 
and keeps the other variables fixed.

An easy computation shows
$$\sigma (1,1,1,1,7) = (1,1,1,1,7) + (1,0,2,1,7)+(0,1,2,1,7)+ (0,0,3,1,7).$$ 
Note that the images under $\sigma$ of the other monomials of the families 
$E, F, G, H$ 
in the expression of $p$ do not contain the spike $(0,0,3,1,7)$. 

So, $\sigma(p)+p$ contains $e (0,0,3,1,7)$ as a term. 
It implies $\alpha = 0$,  and therefore $p_E = 0.$
\end{proof}

\begin{Lemma}\label{F}
If  $p = p_{A}+ p_{B}+ p_C+ p_D+ p_E+ p_F+p_{(G,H)}$ 
is the decomposition of $p \in  \smash{(\Fd \oticala P_5)_{11}^{GL_5}}$
as in  \fullref{linearrelation},  then $p_F = 0.$
\end{Lemma}

\begin{proof}
According to the above three lemmas, we have $p = p_F + p_{(G, H)}.$ 

By the same argument  given in the previous lemma, 
as $(3,3,3,1,1)$  is a spike, the coefficients of its all permutations in 
the expression of 
$p \in  (\Fd \oticala P_5)_{11}^{GL_5}$ are equal each other. 
We denote this common coefficient by $f$.

In the family $F, G, H$, there are exactly two monomials with $z=3, t=3, 
u=1$, namely
$$ (3,1,3,3,1), (1,3,3,3,1).$$
As $p$ is a $GL_5$--invariant in $\Fd \oticala P_5$, we have particularly
$$\omega_{xy}(p) + p = 0.$$
A routine computation shows
\begin{eqnarray*}
\omega_{xy}(3,1,3,3,1)+ (3,1,3,3,1) &=& (2,2,3,3,1) +(1,3,3,3,1)+ 
(0,4,3,3,1) \\
\omega_{xy}(1,3,3,3,1)+ (1,3,3,3,1) &=& (0,4,3,3,1).
\end{eqnarray*}
Note that the images under $\omega_{xy}$ of the other monomials of the 
families $F, G, H$ 
in the expression of $p$ do not contain the spike $(1,3,3,3,1).$

Thus, $\omega_{xy}(p) + p$ contains $f (1,3,3,3,1)$ as a term.  
This  implies $f = 0$   and therefore $p_F = 0.$ 
\end{proof}


\begin{Lemma}\label{G,H}
If  $p = p_{A}+ p_{B}+ p_C+ p_D+ p_E+ p_F+p_{(G,H)}$ 
is the decomposition of $p \in  \smash{(\Fd \oticala P_5)_{11}^{GL_5}}$
as in  \fullref{linearrelation},  then $p_{(G,H)} = 0.$
\end{Lemma}

\begin{proof}
According to the above four lemma, we have $p = p_{(G, H)}.$ 
Recall  that $p_{(G, H)}$ is expressed in terms 
of the elements of the families $G$ and $H.$ 

The proof is divided into 2 steps.

\textbf{Step 1}\qua Let $K$ be the family of all variable permutations of  
monomial  $(3,3,2,2,1).$
We will show that $p$ can be expressed in terms of the elements of  the 
family $K.$

The elements in  family $G$ are divided into pairs by twisting the 
variables 
whose exponents are 5 and 3. 

Consider  two monomials $(5,3,1,1,1), (3,5,1,1,1)$ in one of the pairs.
With  $\omega_{xy}$ as defined in the proof of \fullref{AB}, we have
$$\omega_{xy}(5,3,1,1,1) = (5,3,1,1,1) + (4,4,1,1,1)+ (1,7,1,1,1)+ 
(0,8,1,1,1) $$
$$\omega_{xy}(3,5,1,1,1) = (3,5,1,1,1) + (2,6,1,1,1)+ (1,7,1,1,1)+ 
(0,8,1,1,1). $$
Further, $(1,7,1,1,1)$ does not appear in 
the expressions of the images under  $\omega_{xy}$  of any other elements 
in $G, H$.
As $p$ is a $GL_5$--invariant, it satisfies
$$\omega_{xy}(p) + p = 0 \text{ in }W \Fd \oticala P_5.$$
However, $(1,7,1,1,1)$ is a spike, which does not appear in the expression 
of $\Sq^iY$ 
for any $i$ positive and any monomial $Y$.  
So, the coefficients of the monomials $(5,3,1,1,1)$ and $(3,5,1,1,1)$ 
in the expression of $p$ are equal each other.

On the other hand, by using by $\Sq^2(3,3,1,1,1) 
+\Sq^1(4,3,1,1,1)+\Sq^1(3,4,1,1,1)$ we get
$$(5,3,1,1,1)+ (3,5,1,1,1) =  (3,3,2,2,1)+(3,3,1,2,2)+(3,3,2,1,2).$$
Then, in the expression of $p$, the sum of monomials in family $G$ can be 
written as 
a sum of monomials in family $K.$

Next, we consider in the expression of $p$ the sum of monomials in the 
family $H$.  

First, we consider the set of monomials of the forms $(4,3,c,d,e)$ and 
$(3,4,c,d,e)$ 
in the family $H$.  Then,  $(c,d,e)$ is  a permutation of $(2,1,1).$  
We will show that the sum of  the monomials in this set occurring in the 
expression of $p$  
equals to the sum of some monomials in the family $K.$

We have
$$ (3,4, 2,1,1) = (4,3,2,1,1)+ (3,3,2,2,1)+ (3,3,2,1,2)$$
as $(3,4, 2,1,1) =(4,3,2,1,1)+ (3,3,2,2,1)+ (3,3,2,1,2) +\Sq^1(3,3,2,1,1). $

Similarly, 
\begin{align*}
(3,4, 1,2,1) &  =(4,3,1,2,1)+ (3,3,2,2,1)+ (3,3,1,2,2) \\
(3,4, 1,1,2) & =(4,3,1,1,2)+ (3,3,1,2,2)+ (3,3,2,1,2).
\end{align*}
We also have 
$$(4,3,1,1,2) = (4,3,2,1,1)+ (4,3,1,2,1),$$
because $(4,3,1,1,2) = (4,3,2,1,1)+ (4,3,1,2,1) 
+(4,4,1,1,1)+\Sq^1(4,3,1,1,1).$

Let $h_1 $ and $h_2$ be the coefficients respectively of the monomials 
$(4,3,2,1,1)$  and $(4,3,1,2,1)$  in an expression of $p$ . Then
$$p = h_1(4,3,2,1,1)+h_2(4,3,1,2,1) + \text{ other terms}.$$
On the other hand, $p$ is a $GL_5$--invariant, so
$$\omega_{xy}(p) + p = 0. $$
We have 
\begin{align*}
\omega_{xy}(4,3,2,1,1) & = (4,3,2,1,1) +  (0,7,2,1,1) \\
\omega_{xy}(4,3,1,2,1) & = (4,3,1,2,1)  + (0,7,1,2,1), 
\end{align*}
and the images under $\omega_{xy}$ of any other monomials in the 
expression of $p$ 
do not contain the monomials $(0,7,2,1,1), (0,7,1,2,1), (0,7,1,1,2)$ as 
terms.  Thus, 
$$\omega_{xy}(p) + p = h_1(0,7,2,1,1)+h_2(0,7,1,2,1) + 
\text{ other terms not in }C.$$
So, we get 
$$h_1(0,7,2,1,1)+h_2(0,7,1,2,1) = 0.$$
Applying $\omega_{ut}$, which sends $t$ to $t+u$ and keeps the other 
variables fixed, to this equality, we obtain
$$ h_1(0,7,2,2,0)+ h_2(0,7,1,3,0) = 0.$$
This implies $h_2 =0$, as $(0,7,1,3,0)$ is a spike.   Similarly, we have 
$h_1 = 0.$

We have shown that in the expression of $p$, the sum  of 
monomials of the forms $(4,3,c,d,e)$ and $(3,4,c,d,e)$ in $H$  
can be written in terms of monomials in the family $K.$

Because of the symmetry of the variables, the above argument also works 
for the sum  of monomials in $H$  in the expression of $p$ 

\textbf{Step 2}\qua We will show that if $p \in \calL(K) $  is a '
$GL_5$--invariant, then $p$ equals zero.

Note that if $p \in \calL(K),$ then it  is expressed in the terms of 
the variables permutations of the monomial $(3,3,2,2,1).$
Let $k_{(a,b,c,d,e)}$ be the coefficient of the monomial $(a,b,c,d,e)$ 
in an expression of $p$.  
Because of the symmetry of the variables, in order to prove $p = 0$ 
we  need only to prove $k_{(2,2,3,3,1)} = 0.$

There are exactly three monomials of the form $(a,b,c,3,1)$ in $K$, namely
$$ (3,2,2,3,1), (2,3,2,3,1), (2,2,3,3,1).$$ 
Let $\sigma$ be the transformation defined in the proof of \fullref{E}, 
which  sends $x$ to $x+z$,  $y$ to $y+z$ and fixes the other variables.  

A routine computation shows
\begin{eqnarray*}
\sigma (3,2,2,3,1) &= & (3,2,2,3,1) + (3,0,4,3,1)+(2,2,3,3,1)+(2,0,5,3,1) 
\\
&   & +(1,2,4,3,1) +(1,0,6,3,1)+(0,2,5,3,1)+(0,0,7,3,1) \\
\sigma (2,3,2,3,1) &= & (2,3,2,3,1) +(0,3,4,3,1)+(2,2,3,3,1)+(0,2,5,3,1) \\
&    & + (2,1,4,3,1)+(0,1,6,3,1)+(2,0,5,3,1)+(0,0,7,3,1)\\
\sigma (2,2,3,3,1) &= & (2,2,3,3,1)+(0, 2, 5, 3, 
1)+(2,0,5,3,1)+(0,0,7,3,1).
\end{eqnarray*}
Further, the images under $\sigma$ of the other terms in the expression 
of $p$ do not contain $(0,0,7,3,1)$ as a term,  because the exponents 
of $t$ and $u$ in these monomials are not respectively $3$ and $1$.  
So, $\sigma (p)+p$  contains $(k_{(3,2,2,3,1)}+ 
k_{(2,3,2,3,1)}+k_{(2,2,3,3,1)}) (0,0,7,3,1)$ as a term.
Moreover, as $p$ is a $GL_5$--invariant, $\sigma (p) + p = 0$. 
It implies  
$$k_{(3,2,2,3,1)}+ k_{(2,3,2,3,1)}+k_{(2,2,3,3,1)} = 0.$$
On the other hand, consider the set of monomials of the form 
$(a,b, 2, d, 1)$ 
and\break $(a,b,1,d,2)$ in the family $K$.  Then, $(a,b,d)$ is a permutation 
of $(3,3,2).$
We have
\begin{eqnarray*}
\omega_{xy}(3,3,2,2,1) +(3,3,2,2,1) & = & (2,4,2,2,1) +(1,5,2,2,1) 
+(0,6,2,2,1) \\
\omega_{xy}(3,2,2,3,1) +(3,2,2,3,1) & = & (2,3,2,3,1) +(1,4,2,3,1) 
+(0,5,2,3,1) \\
\omega_{xy}(2,3,2,3,1) +(2,3,2,3,1) & = & (0,5,2,3,1).
\end{eqnarray*}
Let $\omega_{yt}$ be the transformation that sends $y$ to $y+t$ and keeps 
the other variables fixed. Apply $\omega_{yt}$ to $\omega_{xy}(p) + p,$ we 
have
$$ \omega_{yt} (0,5,2,3,1) =  (0,5,2,3,1) + (0,4,2,4,1)+ 
(0,1,2,7,1)+(0,0,2,8,1) .$$
It is easy to see that the actions of $\omega_{xy}$ and $\omega_{yt}$ on 
the monomial do  not change the exponents of $z$ and $u$. Combining this 
with the fact that the exponents of $z$ and $u$ in the other monomials  
are not respectively  $2$ and $1,$ it implies $\omega_{yt}(\omega_{xy}(p) 
+p) $ contains  $(k_{(3,2,2,3,1)}+ k_{(2,3,2,3,1)})(0,1,2,7,1)$ as a term.

Similarly, $\omega_{yt}(\omega_{xy}(p) +p)$ contains $ (k_{(3,2,1,3,2)}+ 
k_{(2,3,1,3,2)})(0,1,1,7,2)$ as a term.

Further, both the exponents of $z$ and $u$ in the other monomials are not 
equal to 1. So, their image under the action $\omega_{yt}, \omega_{xy}$ 
does not contain the monomial $(0,2,1,7,1).$

Hence
\begin{eqnarray*}
\omega_{yt}(\omega_{xy}(p) +p) &  = &(k_{(3,2,2,3,1)}+ 
k_{(2,3,2,3,1)})(0,1,2,7,1)\\
& &+(k_{(3,2,1,3,2)}+ k_{(2,3,1,3,2)})(0,1,1,7,2) \\
&&+\text{ other term  is not in }
\pi_x(C).
\end{eqnarray*}
As $\omega_{xy}(p) +p = 0,$ we have
$$(k_{(3,2,2,3,1)}+ k_{(2,3,2,3,1)})(0,1,2,7,1)+(k_{(3,2,1,3,2)}+ 
k_{(2,3,1,3,2)})(0,1,1,7,2) = 0.$$
As shown in the proof of \fullref{CD}, this implies 
$$ k_{(3,2,2,3,1)}+ k_{(2,3,2,3,1)} = 0.$$
As a consequence
$$k_{(2,2,3,3,1)} = 0. \proved$$
\end{proof}


\section{The fifth  algebraic transfer is not an epimorphism}
\label{sec4}

The target of this section is to prove the following theorem, which is 
also 
numbered as \fullref{main} in the introduction. 

\begin{Theorem}\label{mainagain}
The element $P(h_2) \in \Ext_{\cala}^{5, 16}(\Fd,\Fd)$ is not in the 
image of the 
algebraic transfer $\Tr_5 \co \Fd\otiglfive PH_{11}(B\V_5) \to 
\Ext_{\cala}^{5,16}(\Fd,\Fd)
$. 
\end{Theorem}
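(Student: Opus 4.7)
The plan is to reduce Theorem~\ref{mainagain} directly to Proposition~\ref{invariant}, which was established in Section~\ref{sec3}. The key link is already noted in the introduction: the domain of $\Tr_5$, namely $\Fd \otiglfive PH_{11}(B\V_5)$, is $\Fd$-linearly dual to $(\Fd \oticala P_5)^{GL_5}_{11}$, the space of $GL_5$-invariants in the $\cala$-indecomposables of $P_5$ in degree $11$.

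First, I would invoke Proposition~\ref{invariant}, which asserts $(\Fd \oticala P_5)^{GL_5}_{11} = 0$. Passing to $\Fd$-linear duals immediately yields $\Fd \otiglfive PH_{11}(B\V_5) = 0$. Consequently, the map
$$\Tr_5 \co \Fd \otiglfive PH_{11}(B\V_5) \to \Ext_{\cala}^{5,16}(\Fd,\Fd)$$
is identically zero in this bidegree, and its image in $\Ext_{\cala}^{5,16}(\Fd,\Fd)$ is the trivial subspace $\{0\}$.

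Second, I would invoke the classical fact that $P(h_2)$ is a nonzero element of $\Ext_{\cala}^{5,16}(\Fd,\Fd)$; this is a well-known entry in the standard tables of the cohomology of the Steenrod algebra, computable via the May spectral sequence or by direct inspection of a minimal resolution. Since $P(h_2) \neq 0$ while the image of $\Tr_5$ in this bidegree is $\{0\}$, it follows that $P(h_2)$ is not in the image of $\Tr_5$, which is exactly the statement of the theorem.

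The real work lies entirely in Section~\ref{sec3}, where the invariant computation is carried out; the deduction performed here is essentially formal. If any step in this section deserves the label ``main obstacle'', it is only the appeal to the nonvanishing of $P(h_2)$, which depends on external computations in $\Ext_{\cala}^{*,*}(\Fd,\Fd)$ rather than anything developed in the present paper.
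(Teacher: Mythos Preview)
Your proposal is correct and follows exactly the same approach as the paper: invoke Proposition~\ref{invariant} to conclude that the domain $\Fd\otiglfive PH_{11}(B\V_5)$ vanishes (via duality with $(\Fd\oticala P_5)^{GL_5}_{11}$), then cite the classical nonvanishing of $P(h_2)\in\Ext_{\cala}^{5,16}(\Fd,\Fd)$ (the paper points to Tangora and Bruner) to conclude it cannot lie in the image. There is nothing to add.
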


\begin{proof}          
According to \fullref{invariant}, we have
$$
(\Fd\oticala P_5)^{GL_5}_{11} = 0.
$$
As $\Fd\otiglfive PH_*(B\V_5)$ is dual to $(\Fd\oticala P_5)^{GL_5}$, we 
get
$$
\Fd\otiglfive PH_{11}(B\V_5) = 0. 
$$
It is well known (see, for example, M\,C Tangora \cite{Tangora} and
R\,R Bruner \cite{Bruner}) that the 
element  
$P(h_2)$ is nonzero in  $\Ext_{\cala}^{5,16}(\Fd,\Fd)$. 
So, the fifth algebraic  transfer
$$
\Tr_5 \co \Fd\otiglfive PH_{11}(B\V_5) \to \Ext_{\cala}^{5,16}(\Fd,\Fd)
$$
does not detect the nonzero element $P(h_2).$
\end{proof}

\nocite{Hung97}

\bibliographystyle{gtart}
\bibliography{link}

\end{document}